\documentclass[10pt,a4paper]{article}
\usepackage{amsmath}
\usepackage{amsthm}
\usepackage{amsfonts}
\usepackage{latexsym}

\newcounter{alphthm}
\setcounter{alphthm}{0}


\newtheorem{thm}{Theorem}[section]
\newtheorem{lem}[thm]{Lemma}

\theoremstyle{definition}

\newcommand{\be}{\begin{equation}}
\newcommand{\ee}{\end{equation}}
\newcommand{\ben}{\begin{enumerate}}
\newcommand{\een}{\end{enumerate}}
\newcommand{\pa}{{\partial}}

\title{Ricci Flow Equation on $(\alpha, \beta)$-Metrics}
\author{A. Tayebi, E. Peyghan and B. Najafi}
\begin{document}

\maketitle
\begin{abstract}
In this paper, we study the class of Finsler metrics, namely $(\alpha, \beta)$-metrics, which satisfies the un-normal or normal Ricci flow equation.\\\\
{\bf {Keywords}}:  Finsler metric,  Einstein metric, Ricci flow equation.\footnote{ 2010 Mathematics subject Classification: 53B40, 53C60.}
\end{abstract}

\section{Introduction}
In 1982, R. S. Hamilton for a
Riemannian metric $g_{ij}$ introduce the  following geometric evolution equation
\[
\frac{d}{dt}(g_{ij}) = -2 Ric_{ij}, \ \  g(t = 0)= g_0,
\]
where $Ric_{ij}$ is the Ricci curvature tensor and is known as the un-normalised Ricci flow in Riemannian geometry \cite{ham1}. Hamilton  showed that there is a unique solution to this equation for an
arbitrary smooth metric on a closed manifold over a sufficiently short time. He also showed that Ricci flow preserves positivity of the
Ricci curvature tensor in three dimensions and the curvature operator in all
dimensions \cite{H}. The Ricci flow theory related geometric analysis and various applications  became one of the most intensively
developing branch of modern mathematics \cite{caozhu,ham1,kleiner,rbook,SR}. The most important achievement of
this theory was the proof of W. Thurston's geometrization conjecture by
G. Perelman \cite{gper1,gper2,gper3}. The main results on Ricci flow
evolution were proved originally for (pseudo) Riemannian and K\"{a}hler
geometries. Thus the Ricci flow theory became a very powerful method in understanding
the geometry and topology of Riemannian and K\"{a}hlerian manifolds.

On the other hand, Finsler geometry is a natural extension of Riemannian geometry without  quadratic restriction \cite{TP2}. But it is not simple to define Ricci flows of mutually compatible fundamental geometric structures on Finsler manifolds. The problem
of constructing the Finsler-Ricci flow theory contains a number of new conceptual
and fundamental issues on compatibility of geometrical and physical
objects and their optimal configurations. The same equation can be used in the Finsler setting, since both the fundamental tensor  $g_{ij}$ and Ricci tensor $Ric_{ij}$ have been generalized to that broader framework,
albeit gaining a $y$ dependence in the process \cite{Ba}\cite{TP}. However, there are some reasons why we shall refrain from doing so: (i) not every symmetric covariant 2-tensor $g_{ij}(x, y)$ arises from a Finsler metric $F(x, y)$; (ii) there is more than one geometrical context in which $g_{ij}$ makes sense. Thus, Bao  called this equation as an un-normalised Ricci flow for Finsler geometry. Using the elegance work of Akbar-Zadeh in \cite{A}, Bao  proposed  the following normalised  Ricci flow equation for Finsler metrics
\begin{equation}
\frac{d}{dt}\log F= -R +\frac{1}{Vol(SM)}\int_{SM}R\ dV,\ \ \  F(t=0)=F_0,
\end{equation}
where the underlying manifold $M$ is compact \cite{Ba}.

In a series of papers,  Vacaru studied Ricci
flow evolutions of geometries and physical models of gravity with symmetric
and nonsymmetric metrics and geometric mechanics, when the field equations are subjected to nonholonomic
constraints and the evolution solutions, mutually transform as Riemann and Finsler geometries \cite{vricci1,vricci2,vricci3,vricci4,vfract1,vrfsol2,vrfsol3}.

It is remarkable that, Chern had asked  whether every smooth manifold admits a Ricci-constant Finsler metric? The weaker case of this question is that whether every smooth manifold admits a Einstein Finsler metric? His question has already been settled in the affirmative for dimension 2 because, by a construction of Thurston's, every Riemannian metric on a two-dimensional manifold admits a complete Riemannian metric of constant Gaussian curvature.

\section{Preliminaries}
Let $M$ be an $n$-dimensional $ C^\infty$ manifold. Denote by $T_x M $
the tangent space at $x \in M$, and by $TM=\cup _{x \in M} T_x M $
the tangent bundle of $M$.

A Finsler metric on $M$ is a function $ F:TM
\rightarrow [0,\infty)$ which has the following properties:\\
(i) $F$ is $C^\infty$ on $TM_{0}:= TM \setminus \{ 0 \}$;\\
(ii) $F$ is positively 1-homogeneous on the fibers of tangent bundle $TM$,\\
(iii) for each $y\in T_xM_0$, the following  form $g_y$ on $T_xM$  is positive definite,
\[
g_{y}(u,v):={1 \over 2} \left[  F^2 (y+su+tv)\right]|_{s,t=0}, \ \
u,v\in T_xM.
\]
For a Finsler metric $F=F(x,y)$ on a manifold $M$, the spray ${\bf G}=y^i\frac{\partial}{\partial x^i}-2 G^i \frac{\partial}{\partial y^i}$ is a vector field on $TM$, where $G^i=G^i(x,y)$ are defined by
\[
G^i = {g^{il}\over 4} \Big\{ [F^2]_{x^k y^l}y^k - [F^2]_{x^l} \Big\},\label{Gi}
\]

\bigskip

Let  $x\in M$ and $F_x:=F|_{T_xM}$.  To measure the non-Euclidean feature of $F_x$, define ${\bf C}_y:T_xM\otimes T_xM\otimes T_xM\rightarrow \mathbb{R}$ by
\[
{\bf C}_{y}(u,v,w):={1 \over 2} \frac{d}{dt}\left[{\bf g}_{y+tw}(u,v)
\right]|_{t=0}, \ \ u,v,w\in T_xM,
\]
The family ${\bf C}:=\{{\bf C}_y\}_{y\in TM_0}$  is called the Cartan torsion. It is well known that ${\bf{C}}=0$ if and only if $F$ is Riemannian. For $y\in T_x M_0$, define  mean Cartan torsion ${\bf I}_y$ by ${\bf I}_y(u):=I_i(y)u^i$, where $I_i:=g^{jk}C_{ijk}$, $C_{ijk}=\frac{1}{2}\frac{\pa g_{ij}}{\pa y^k}$ and $u=u^i\frac{\partial}{\partial x^i}|_x$. By Deicke's  Theorem, $F$ is Riemannian  if and only if ${\bf I}_y=0$.

Regarding the Cartan tensors of these metrics,  M. Matsumoto  introduced the notion of C-reducibility and proved that any Randers metric  $F=\alpha+\beta$ and Kropina metric $F=\alpha^2/\beta$ are C-reducible, where $\alpha=\sqrt{a_{ij}y^iy^j}$  is  a Riemannian metric and  $\beta=b_i(x)y^i$ is a 1-form on $M$. Matsumoto-H\={o}j\={o} proved that the converse is true \cite{MH}. Furthermore, by considering Kropina and Randers metrics, Matsumoto introduced the notion of $(\alpha, \beta)$-metrics \cite{Mat5}. An $(\alpha, \beta)$-metric is a Finsler metric on $M$ defined by $F:=\alpha\phi(s)$, where $s=\beta/\alpha$,  $\phi=\phi(s)$ is a $C^\infty$ function on the $(-b_0, b_0)$ with certain regularity, $\alpha$ is a Riemannian metric and $\beta$ is a 1-form on $M$.

In \cite{Mat4},  Matsumoto-Shibata introduced the notion of semi-C-reducibility by considering the form of Cartan torsion of a non-Riemannian $(\alpha,\beta)$-metric on a manifold $M$ with dimension $n\geq 3$. A Finsler metric is called semi-C-reducible if its Cartan tensor is given by
\[
C_{ijk}={\frac{p}{1+n}}\{h_{ij}I_k+h_{jk}I_i+h_{ki}J_j\}+\frac{q}{C^2}I_iI_jI_k,
\]
where $p=p(x,y)$ and $q=q(x,y)$ are scalar function on $TM$, $h_{ij}:=g_{ij}-F^{-2}y_iy_j$ is the angular metric and $C^2=I^iI_i$.  If $q=0$, then $F$ is just C-reducible  metric.

\section{Ricci Flow Equation}
In 1982, R. S. Hamilton introduce the  following geometric evolution equation
\[
\frac{d}{dt}(g_{ij}) = -2 Ric_{ij}, \ \  g(t = 0)= g_0
\]
which is known as the un-normalised Ricci flow in Riemannian geometry \cite{ham1}. The same equation can be used in the Finsler setting, since both the fundamental tensor  $g_{ij}$ and Ricci tensor $Ric_{ij}$ have been generalized to that broader framework,
albeit gaining a $y$ dependence in the process. However, there are some reasons why we shall refrain from doing so: (i) Not every symmetric covariant 2-tensor $g_{ij}(x, y)$ arises from a Finsler metric $F(x, y)$; (ii) There is more than one geometrical context in which $g_{ij}$ makes sense. Thus, Bao  called this equation as an un-normalised Ricci flow for Finsler geometry.

Professor Chern had asked, on several occasions, whether every smooth manifold
admits a Ricci-constant Finsler metric. It is hoped that the Ricci flow in
Finsler geometry eventually proves to be viable for addressing Chern's question.
How to formulate and generalize these constructions for non-Riemannian
manifolds and physical theories is a challenging topic in mathematics and
physics. Bao studied Ricci flow equation in Finsler spaces \cite{Ba}. In the following
a scalar Ricci flow equation is introduced according to the Bao's paper.

A deformation of Finsler metrics means a 1-parameter family of metrics
$g_{ij}(x, y, t)$, such that $t\in [-\epsilon, \epsilon]$ and $\epsilon > 0$ is sufficiently small. For such a
metric $\omega= u_idx^i$, the volume element as well as the connections attached to
it depend on $t$. The same equation can be used in the Finsler setting. We can
also use another Ricci flow equation instead of this tensor evolution equation \cite{Ba}. By contracting $\frac{d}{dt}g_{ij}=-2Ric_{ij}$ with $y^i$ and $y^j$ gives, via Euler's theorem,
we get
\[
\frac{\partial F^2}{\partial t}=-2F^2 R,
\]
where $R=\frac{1}{F^2} Ric$. That is,
\[
d \log F=-R, \ \ F(t=0)= F_0.
\]
This scalar equation directly addresses the evolution of the Finsler metric $F$,
and makes geometrical sense on both the manifold of nonzero tangent vectors
$TM_0$ and the manifold of rays. It is therefore suitable as an un-normalized
Ricci flow for Finsler geometry.

\section{Un-Normal Ricci Flow Equation on $(\alpha, \beta)$-Metrics }
Here,  we study $(\alpha, \beta)$-metrics satisfying  un-normal Ricci flow equation and prove the following.

\begin{thm}\label{thm1}
Let $(M, F)$ be a Finsler manifold of dimension $n\geq 3$. Suppose that $F=\Phi(\frac{\beta}{\alpha})\alpha$ be an $(\alpha, \beta)$-metric on $M$.  Then every deformation $F_t$ of the metric $F$  satisfying un-normal Ricci flow equation is an  Einstein metric.
\end{thm}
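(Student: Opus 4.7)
The plan is to reduce the tensor Ricci flow to its scalar form, combine it with the explicit shape of $F = \alpha\phi(s)$, and read off the Einstein condition by separating the resulting identity according to its $y$-structure.

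Starting from the scalar form of the un-normal Ricci flow derived in Section~3, $\partial_t F^2 = -2\,\mathrm{Ric}$, I would substitute $F^2 = \alpha^2\phi^2(s)$ with $s=\beta/\alpha$. Writing $\dot a_{ij}:=\partial_t a_{ij}$ and $\dot b_i:=\partial_t b_i$, the chain rule applied via $\partial_t\alpha^2 = \dot a_{ij}y^iy^j$, $\partial_t\beta = \dot b_iy^i$ and $\partial_t s = \dot b_iy^i/\alpha - s\,\dot a_{ij}y^iy^j/(2\alpha^2)$ yields
\[
-2\,\mathrm{Ric} \;=\; \phi(\phi - s\phi')\,\dot a_{ij}y^iy^j \;+\; 2\alpha\,\phi\phi'\,\dot b_iy^i.
\]
Dividing by $F^2=\alpha^2\phi^2$ and using $R = \mathrm{Ric}/F^2$ gives the fundamental identity
\[
-2R\,\alpha^2\phi \;=\; (\phi - s\phi')\,\dot a_{ij}y^iy^j \;+\; 2\alpha\,\phi'\,\dot b_iy^i,
\]
which must hold at every $y \in T_xM_0$.

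The second step is to separate this identity according to its rational structure in $\alpha$. Since $\alpha^2,\beta,\dot a_{ij}y^iy^j$ and $\dot b_iy^i$ are polynomial in $y^i$ while $\alpha=\sqrt{\alpha^2}$ is irrational, and since $\phi,\phi'$ depend on $y$ only through $s=\beta/\alpha$, in the non-Riemannian regime $\phi'\not\equiv 0$ one can split the identity into its $\alpha$-rational and $\alpha$-irrational pieces. This produces a decoupled pair of constraints linking $R$ to $\dot a_{ij}$ and to $\dot b_i$ respectively. A linear-algebra argument on these constraints forces $\dot a_{ij}=\lambda(x,t)\,a_{ij}$ and $\dot b_i=\mu(x,t)\,b_i$, together with the compatibility between $\lambda,\mu$ and the profile $\phi$ that cancels the residual $y$-dependence through $s\phi'/\phi$, so that the evolution of the $(\alpha,\beta)$-data is essentially conformal.

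Substituting these relations back collapses the right-hand side of the identity to a multiple of $\alpha^2\phi$ that is independent of direction, so $R$ itself is a function of $x$ (and $t$) only. This is precisely the Einstein condition $\mathrm{Ric} = (n-1)\sigma(x,t)\,F^2$. The dimension hypothesis $n\geq 3$ is invoked through the semi-$C$-reducibility formula of Matsumoto-Shibata recalled in the Preliminaries, which controls the admissible form of the Cartan tensor (and hence of $\mathrm{Ric}_{ij}$) for a non-Riemannian $(\alpha,\beta)$-metric and closes the decoupled system. The main obstacle I anticipate is the separation-of-variables step itself: one must carefully track the $s$-dependence of $\phi,\phi'$ against the polynomial factors $\dot a_{ij}y^iy^j$ and $\alpha\,\dot b_iy^i$, handle the distinct sub-cases (Randers, Kropina, and generic profiles) where the balance between rational and irrational parts behaves differently, and rule out degenerate cancellations, in order to conclude that $R$ has no residual $y$-dependence.
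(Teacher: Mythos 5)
Your algebra reducing the scalar flow to $-2R\,\alpha^2\phi=(\phi-s\phi')\,\dot a_{ij}y^iy^j+2\alpha\phi'\,\dot b_iy^i$ is fine, but the argument built on it has three genuine gaps. First, it presupposes that the deformation stays inside the $(\alpha,\beta)$-class with the same profile $\phi$, so that $\partial_t F^2$ is captured by $\dot a_{ij}$ and $\dot b_i$ alone; the theorem only assumes $F_t$ is a deformation satisfying the flow, and the Ricci flow does not in general preserve the $(\alpha,\beta)$ form, so this is an added hypothesis, not a consequence. Second, and more seriously, the separation into ``$\alpha$-rational and $\alpha$-irrational pieces'' is not available here: that trick requires every term, after clearing denominators, to be polynomial in $y$ with $\alpha$ entering only through explicit powers, whereas $\phi,\phi'$ are arbitrary smooth functions of $s=\beta/\alpha$ and, crucially, $R=\mathrm{Ric}/F^2$ is an unknown $0$-homogeneous function of $y$ with no rational structure at all; with the unknown $R$ multiplying the left-hand side the identity cannot be split into independent constraints, so $\dot a_{ij}=\lambda a_{ij}$ and $\dot b_i=\mu b_i$ do not follow. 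Third, even granting that conformal form, substituting back gives $-2R\phi=\lambda(\phi-s\phi')+2\mu s\phi'$, whose right-hand side still depends on $s$; concluding $R=R(x)$ needs exactly the ``compatibility between $\lambda,\mu$ and $\phi$'' that you leave unproved. Note also that your appeal to semi-C-reducibility (and hence to $n\geq 3$) is decorative: the Cartan tensor never actually enters your computation.

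For comparison, the paper's proof never introduces $\dot a_{ij}$ or $\dot b_i$. It differentiates the tensor flow $g'_{ij}=-2Ric_{ij}$ vertically to get $C'_{ijk}=-Ric_{ij,k}$ and expands $Ric_{ij,k}I^iI^jI^k$ in terms of $R$ and its $y$-derivatives (Lemma \ref{Lem1}); independently, using the semi-C-reducible form of the Cartan tensor of an $(\alpha,\beta)$-metric in dimension $n\geq 3$, it computes the variations of $I_i$, $h_{ij}$ and $C_{ijk}$ and shows $C'_{ijk}I^iI^jI^k$ is a multiple of $||I||^2$ (Lemma \ref{Lem2}). Comparing the two expressions forces $F^2R_{,i,j,k}I^iI^jI^k$ to be a multiple of $||I||^2$, from which the paper concludes $R_{,i}=0$, i.e. $R=R(x)$, the Einstein condition; this is where $n\geq 3$ (Matsumoto--Shibata) genuinely does work. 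To salvage your route you would have to both justify that the flow preserves the $(\alpha,\beta)$ structure and replace the separation step by an argument valid for arbitrary $\phi$ and for an unknown, direction-dependent $R$, which is precisely what the paper's Cartan-tensor computation is designed to avoid.
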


To prove the Theorem \ref{thm1},  we need the following.

\bigskip

\begin{lem}\label{Lem1}
Let $F_t$ be a deformation of an $(\alpha, \beta)$-metric $F$ on manifold $M$ of dimension $n\geq 3$. Then the variation of Cartan tensor is given by following
\begin{eqnarray}
C'_{ijk}I^i I^j I^k=- \frac{2R(1+nq)}{1+n}||I||^4-\frac{1}{2}F^2R_{,i,j,k}I^i I^j I^k- 3||I||^2 I^mR_{,m}\label{Ric3}
\end{eqnarray}
where $||I||^2=I_mI^m$.
\end{lem}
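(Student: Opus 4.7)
My plan is to turn the scalar form of the un-normal Ricci flow, $\partial_t F^{2}=-2F^{2}R$ from Section~3, into an evolution equation for the Cartan tensor, and then specialise to the $(\alpha,\beta)$ setting using the semi-$C$-reducibility recorded in Section~2. Because $g_{ij}=\tfrac12(F^{2})_{,i,j}$ and the operators $\partial/\partial t$ and $\partial/\partial y^{k}$ commute, applying $\tfrac12\partial_{y^{i}}\partial_{y^{j}}$ to the scalar flow gives
\[
g'_{ij}=-\bigl(F^{2}R\bigr)_{,i,j},
\]
and one further differentiation in $y^{k}$, together with $C_{ijk}=\tfrac12 g_{ij,k}$, yields $C'_{ijk}=-\tfrac12(F^{2}R)_{,i,j,k}$.

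Expanding this triple derivative via the homogeneity identities $(F^{2})_{,i}=2y_{i}$, $(y_{i})_{,j}=g_{ij}$ and $g_{ij,k}=2C_{ijk}$ produces
\[
C'_{ijk}=-2R\,C_{ijk}-g_{ij}R_{,k}-g_{ik}R_{,j}-g_{jk}R_{,i}-y_{i}R_{,j,k}-y_{j}R_{,i,k}-y_{k}R_{,i,j}-\tfrac12 F^{2}R_{,i,j,k}.
\]
I then contract with $I^{i}I^{j}I^{k}$. The crucial reduction is the classical identity $I_{i}y^{i}=0$ (a consequence of the fact that $C_{ijk}y^{j}=0$ by homogeneity), which kills the three terms carrying a factor $y_{\bullet}$. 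Each of the three metric-trace terms collapses to $\|I\|^{2}I^{m}R_{,m}$, contributing $-3\|I\|^{2}I^{m}R_{,m}$ in total, while the final term survives verbatim as $-\tfrac12 F^{2}R_{,i,j,k}I^{i}I^{j}I^{k}$.

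It remains to evaluate $-2R\,C_{ijk}I^{i}I^{j}I^{k}$, and here I would invoke the semi-$C$-reducibility of non-Riemannian $(\alpha,\beta)$-metrics recalled in Section~2. Using $h_{ij}I^{i}=I_{j}$ (again from $y_{i}I^{i}=0$) together with $C^{2}=\|I\|^{2}$, each of the three symmetrised $h$-terms evaluates to $\|I\|^{4}$ and the $I_{i}I_{j}I_{k}$ piece contributes $q\|I\|^{4}$; combining these with the trace normalisation forced by $g^{ij}C_{ijk}=I_{k}$ identifies $C_{ijk}I^{i}I^{j}I^{k}$ with $\frac{1+nq}{1+n}\|I\|^{4}$. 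Assembling the three contributions produces the stated identity.

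The calculation is essentially bookkeeping once the scalar flow has been differentiated; I expect the main delicate points to be correctly tracking the seven terms from the triple $y$-derivative of $F^{2}R$, and applying semi-$C$-reducibility with the right normalisation of the coefficients $p$ and $q$. No deeper geometric input is needed beyond $I_{i}y^{i}=0$ and the semi-$C$-reducible form of the Cartan tensor.
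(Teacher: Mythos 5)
Your argument is essentially the paper's own proof: you expand the triple vertical derivative $-\tfrac12(F^2R)_{,i,j,k}$ (which is exactly $-Ric_{ij,k}=C'_{ijk}$, the paper's key step), contract with $I^iI^jI^k$ killing the $y$-terms via $y_iI^i=0$, and evaluate $C_{ijk}I^iI^jI^k$ through semi-C-reducibility with $p+q=1$, landing on the same coefficient $\frac{1+nq}{1+n}\|I\|^4$ that the paper uses. The only cosmetic difference is that you start from the scalar flow $\partial_t F^2=-2F^2R$ and differentiate in $y$, whereas the paper starts from $g'_{ij}=-2Ric_{ij}$ and writes $C'_{ijk}=-Ric_{ij,k}$; these are the same computation since $Ric_{ij}=\tfrac12(F^2R)_{,i,j}$.
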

\begin{proof}
First assume that $F_t$ be a deformation of a Finsler metric on a two-dimensional manifold $M$ satisfies
Ricci flow equation, i.e.
\begin{equation}
\frac{d}{dt}g_{ij}:= g'_{ij}=-2Ric_{ij}, \ \ \ d \log F:=\frac{F'}{F}=-R,
\end{equation}
where $R=\frac{1}{F^2} Ric$. By definition of Ricci tensor, we have
\begin{eqnarray}
\nonumber Ric_{ij}\!\!\!\!&=&\!\!\!\!\ \frac{1}{2}[RF^2]_{y^iy^j}\\
\!\!\!\!&=&\!\!\!\!\ R g_{ij}+\frac{1}{2}F^2R_{,i,j}+R_{,i}y_j+R_{,i}y_i \label{R}
\end{eqnarray}
where $R_{,i}=\frac{\partial R}{\partial y^i}$ and $R_{,i,j}=\frac{\partial^2 R}{\partial y^i\partial y^j}$. Taking a vertical derivative of (\ref{R}) and using $y_{i,j}=g_{ij}$ and $FF_k=y_k$ yields
\begin{eqnarray}
\nonumber Ric_{ij,k}= 2RC_{ijk}+ \frac{1}{2}F^2R_{,i,j,k}\!\!\!\!&+&\!\!\!\!\ \{g_{jk}R_{,i}+g_{ij}R_{,k}+g_{ki}R_{,j}\}\\
\!\!\!\!&+&\!\!\!\!\ \{R_{,j,k}y_i+R_{,i,j}y_k+R_{,k,i}y_j\}.\label{Ric1}
\end{eqnarray}
Contracting (\ref{Ric1}) with $I^i I^j I^k$ and using $y_iI^i=y^iI_i=0$ implies that
\begin{eqnarray}
Ric_{ij,k}I^i I^j I^k= 2RC_{ijk}I^i I^j I^k+\frac{1}{2}F^2R_{,i,j,k}I^i I^j I^k+ 3||I||^2 I^mR_{,m}.\label{Ric}
\end{eqnarray}
The Cartan tensor of an $(\alpha, \beta)$-metric on a $n$-dimensional manifold $M$ is given by
\begin{equation}\label{Cartan}
C_{ijk}={\frac{p}{1+n}}\{h_{ij}I_k+h_{jk}I_i+h_{ki}J_j\}+\frac{q}{||I||^2}I_iI_jI_k,
\end{equation}
where $p=p(x,y)$ and $q=q(x,y)$ are scalar function on $TM$ with $p+q=1$. Multiplying  (\ref{Cartan}) with $I^i I^j I^k$ yields
\begin{equation}\label{C}
C_{ijk}I^i I^j I^k=({\frac{p}{1+n}}+q )||I||^4=\frac{1+nq}{1+n}||I||^4.
\end{equation}
Then by (\ref{Ric}) and (\ref{C}), we get
\begin{eqnarray}
Ric_{ij,k}I^i I^j I^k= \frac{2R(1+nq)}{1+n}||I||^4+\frac{1}{2}F^2R_{,i,j,k}I^i I^j I^k+ 3||I||^2 I^mR_{,m}.\label{Ric2}
\end{eqnarray}
On the other hand, since $F_t$ satisfies Ricci flow equation then
\begin{eqnarray}
C'_{ijk}=\frac{1}{2}\frac{\partial g'_{ij}}{\partial y^k}=\frac{1}{2}\frac{\partial(-2Ric_{ij})}{\partial y^k}=-Ric_{ij,k}.\label{Car}
\end{eqnarray}
By (\ref{Ric2}) and (\ref{Car}) we get (\ref{Ric3}).
\end{proof}

\bigskip

\begin{lem}\label{Lem2}
Let $F_t$ be a deformation of an $(\alpha, \beta)$-metric $F$ on a $n$-dimensional manifold $M$. Then $C'_{ijk}I^iI^jI^k$ is a factor of $||I||^2$.
\end{lem}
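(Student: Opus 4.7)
The plan is to bypass Lemma~\ref{Lem1} and work directly from the semi-C-reducible formula~(\ref{Cartan}) for the Cartan tensor of an $(\alpha,\beta)$-metric. Differentiating~(\ref{Cartan}) with respect to $t$ and then contracting with $I^iI^jI^k$ should reveal that every resulting monomial manifestly carries $\|I\|^2$ as a factor, making the divisibility transparent without invoking the Ricci flow identity $C'_{ijk}=-Ric_{ij,k}$.

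The key algebraic input is a handful of contraction identities that follow from $y^iI_i=0$: the angular metric satisfies $h_{ij}I^j=I_i$ and $h_{ij}I^iI^j=\|I\|^2$, and of course $I_kI^k=\|I\|^2$. Consequently the two building blocks of~(\ref{Cartan}), namely $h_{ab}I_c$ and $I_aI_bI_c$, contract with $I^aI^bI^c$ to give $\|I\|^4$ and $\|I\|^6$ respectively. I would apply $d/dt$ to~(\ref{Cartan}) using the product/quotient rule to expand $C'_{ijk}$ as a sum of five groups of terms, coming from $p'$, $(q/\|I\|^2)'$, $h'_{ij}$, $I'_i$, and $(\|I\|^2)'$. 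In each group, at least two of the three ``index-carrying'' factors $h_{\cdot\cdot}$, $I_\cdot$ are left undifferentiated, so when $I^iI^jI^k$ is brought in, Leibniz expansion produces schematic contributions like
\[
(h_{ij}I_k)'I^iI^jI^k \;=\; h'_{ij}I^iI^j\,\|I\|^2 \;+\; \|I\|^2\,I'_kI^k,
\]
and for the $q$-piece $(I_iI_jI_k)'I^iI^jI^k=3I'_mI^m\|I\|^4$, which combines with the $q/\|I\|^2$ coefficient to yield $3qI'_mI^m\|I\|^2$. Collecting, every term is visibly divisible by $\|I\|^2$, which proves the claim.

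The only real bookkeeping obstacle is making sure the derivatives of the ``scalar'' factors $p$, $q$, and $\|I\|^2$ do not produce any stray term lacking the required factor. This is handled by the single structural observation that in~(\ref{Cartan}) the three free indices $i,j,k$ are always distributed across objects that are linear in $I$ (or $h$ contracted once with $I$), so however the product rule spreads the $t$-derivative, at least two undifferentiated $I$- or $h$-type factors survive, and these are precisely what furnishes the explicit $\|I\|^2$ after contraction with $I^iI^jI^k$.
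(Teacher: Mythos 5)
Your argument is correct, and it takes a genuinely different route from the paper's own proof. The paper establishes the lemma by first deriving, from the un-normal Ricci flow equation, the explicit variation formulas $(g^{ij})'=2Ric^{ij}$, $I'_i=-\rho_i$, $y'_i=-2Ric_{im}y^m$ and the resulting expression for $h'_{ij}$, then substituting all of these into the differentiated semi-C-reducible formula (\ref{Cartan}) and contracting, so that the factor $||I||^2$ appears only at the end of an explicit computation. You instead differentiate (\ref{Cartan}) and contract with $I^iI^jI^k$ directly, using only $y_iI^i=0$ (hence $h_{ij}I^j=I_i$ and $h_{ij}I^iI^j=||I||^2$): in every Leibniz term the undifferentiated $I$- and $h$-factors already supply the overall $||I||^2$ (for the $q$-block the surviving $||I||^4$ is cut down by the $1/||I||^2$ coefficient), so the divisibility is a purely algebraic consequence of semi-C-reducibility, valid for an arbitrary deformation and independent of the flow equation --- which is exactly the statement as written, and is all that the proof of Theorem \ref{thm1} actually uses. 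What the paper's longer computation buys is the explicit coefficient multiplying $||I||^2$ (in terms of $\rho_m$, $Ric_{pq}$, $q'$), which your argument does not produce but which is never used later; your bookkeeping even yields a factor free of the $1/||I||^2$ terms present in the paper's final display. Two minor caveats: your phrase about ``at least two'' undifferentiated index-carrying factors overstates the count in the $p$-block (when the derivative hits $h_{ij}$ only $I_k$ survives, but $I_kI^k=||I||^2$ already suffices, as your displayed identity shows), and, exactly like the paper, you tacitly assume that $F_t$ remains semi-C-reducible along the deformation so that (\ref{Cartan}) may be differentiated with $t$-dependent $p$ and $q$; on that point you are no worse off than the original.
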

\begin{proof}
Since $g^{ij}g_{jk}=\delta^i_k$,  then we have
\[
0=(g^{ij}g_{jk})'=g'^{ij}g_{jk}+g^{ij}g'_{jk}=g'^{ij}g_{jk}-2g^{ij}Ric_{jk},
\]
or equivalently $g'^{ij}g_{jk}=2g^{ij}Ric_{jk}$ which contracting it  with  $g^{lk}$ implies that
\begin{equation}
g'^{il}=2Ric^{il}.
\end{equation}
Then we have
\begin{eqnarray}
\nonumber I'_i=(g^{jk}C_{ijk})' \!\!\!\!&=&\!\!\!\!\ (g^{jk})'C_{ijk}+g^{jk}C'_{ijk}\\
\nonumber \!\!\!\!&=&\!\!\!\!\ 2Ric^{jk}C_{ijk}-g^{jk}Ric_{jk,i}\\
\nonumber \!\!\!\!&=&\!\!\!\!\ 2Ric^{jk}g_{jk,i}-(g^{jk}Ric_{jk})_{,i}+g^{jk}_{\ \ ,i}Ric_{jk}\\
\!\!\!\!&=&\!\!\!\!\ -(g^{jk}Ric_{jk})_{,i}=-\rho_{i}
\end{eqnarray}
where $\rho:=g^{jk}Ric_{jk}$ and $\rho_i=\frac{\partial \rho}{\partial y^i}$. Thus
\begin{eqnarray}
\nonumber I'^i=(g^{ij}I_j)' \!\!\!\!&=&\!\!\!\!\ (g^{ij})'I_j+g^{ij}I'_j\\
\nonumber \!\!\!\!&=&\!\!\!\!\ 2Ric^{ij}I_j-g^{ij}\rho_{j}\\
\!\!\!\!&=&\!\!\!\!\ 2Ric^{ij}I_j-\rho^{i}.
\end{eqnarray}
The variation of $y_i:=FF_{y^i}$ with respect to $t$ is given by
\[
y'_i=-2Ric_{im}y^m.
\]
Therefore, we can compute the variation of angular metric $h_{ij}$ as follows
\begin{eqnarray}
\nonumber h'_{ij}=(g_{ij}-F^{-2}y_iy_j)' \!\!\!\!&=&\!\!\!\!\ -2Ric_{ij}-2F^{-2}Ry_iy_j+2F^{-2}(Ric_{im}+Ric_{jm})y^m\\
\nonumber\!\!\!\!&=&\!\!\!\!\ -2Ric_{ij}+2R(h_{ij}-g_{ij})+2(Ric_{im}\ell_j+Ric_{jm}\ell_i)\ell^m,
\end{eqnarray}
where $\ell_i:=F^{-1}y_i$. Thus
\begin{equation}
h'_{ij}=2Rh_{ij}-2Rg_{ij}-2Ric_{ij}+2(Ric_{im}\ell_j+Ric_{jm}\ell_i)\ell^m.
\end{equation}
Now, we consider the variation of Cartan tensor
\begin{eqnarray}\label{mainCar}
\nonumber C'_{ijk} \!\!\!\!&=&\!\!\!\!\ \big\{\frac{p}{1+n}[h_{ij}I_k+h_{jk}I_i+h_{ki}J_j]+\frac{q}{||I||^2}I_iI_jI_k\big\}'\\
\nonumber \!\!\!\!&=&\!\!\!\!\ \frac{p'}{1+n}[h_{ij}I_k+h_{jk}I_i+h_{ki}J_j]+\frac{q'}{||I||^2}I_iI_jI_k\\
\!\!\!\!&+&\!\!\!\!\ \frac{p}{1+n}\Big[h_{ij}I_k+h_{jk}I_i+h_{ki}J_j\Big]'+q\Big[\frac{1}{||I||^2}I_iI_jI_k\Big]'
\end{eqnarray}
We have
\begin{equation}\label{-1}
[\frac{1}{||I||^2}I_iI_jI_k]'=\frac{-(I'^mI_m+I^mI'_m)}{||I||^2}C_{ijk} -\frac{1}{||I||^2}(\rho_{i}I_jI_k+\rho_{j}I_iI_k+\rho_{k}I_iI_j)
\end{equation}
Multiplying (\ref{-1}) with $I^i I^j I^k$ implies that
\begin{eqnarray}\label{1}
\nonumber[\frac{1}{||I||^2}I_iI_jI_k]'I^iI^jI^k\!\!\!\!&=&\!\!\!\!\ - \big[\frac{(nq+1)(I'^mI_m+I^mI'_m)}{(n+1)||I||^2}+3\rho_mI^m\big]||I||^2\\
\!\!\!\!&=&\!\!\!\!\ \big[\frac{2(nq+1)(\rho^mI_m-Ric^{pq}I_pI_q)}{(n+1)||I||^2}-3\rho_mI^m\big]||I||^2
\end{eqnarray}
On the other hand
\begin{eqnarray}\label{2}
\nonumber[h_{ij}I_k+h_{jk}I_i+h_{ki}J_j]'\!\!\!\!&=&\!\!\!\!\ -(\rho_ih_{jk}+\rho_jh_{ik}+\rho_kh_{ij})-2R[I_ig_{jk}+I_jg_{ik}+I_kg_{ij}]\\
\nonumber \!\!\!\!&-&\!\!\!\!\ 2[I_iRic_{jk}+I_jRic_{ik}+I_kRic_{ij}]\\
\nonumber \!\!\!\!&+&\!\!\!\!\ 2R[I_ih_{jk}+I_jh_{ik}+I_kh_{ij}]\\
\!\!\!\!&+&\!\!\!\!\ 2[ I_i\Lambda_{jk}+ I_j\Lambda_{ik}+I_k\Lambda_{ij}],
\end{eqnarray}
where $\Lambda_{jk}:=(Ric_{jr}\ell_k+Ric_{kr}\ell_j)\ell^r$. Multiplying (\ref{2}) with $I^i I^j I^k$ implies that
\begin{eqnarray}\label{3}
[h_{ij}I_k+h_{jk}I_i+h_{ki}J_j]'I^i I^j I^k= -3(\rho_mI^m+2Ric_{pq}I^pI^q)||I||^2.
\end{eqnarray}
On the other hand, since $p'+q'=0$ then we get
\begin{eqnarray}\label{4}
\Big[\frac{p'}{1+n}(h_{ij}I_k+h_{jk}I_i+h_{ki}J_j)+\frac{q'}{||I||^2}I_iI_jI_k\Big]I^i I^j I^k=\frac{nq'}{1+n}||I||^4
\end{eqnarray}
Putting  (\ref{1}), (\ref{3}) and (\ref{4}) in (\ref{mainCar}) implies that $C'_{ijk}I^iI^jI^k$ is a factor of $||I||^2$. More precisely, we have the following
\begin{eqnarray}
\nonumber C'_{ijk}I_iI_jI_k=\Big[\frac{nq'}{n+1}||I||^2    \!\!\!\!&-&\!\!\!\!\ q\Big(\frac{2(nq+1)(\rho^mI_m-Ric^{pq}I_pI_q)}{(n+1)||I||^2}-3\rho_mI^m\Big) \\
 \!\!\!\!&-&\!\!\!\!\ \frac{3p}{n+1}(\rho_mI^m+2Ric_{pq}I^pI^q) \Big]||I||^2.
\end{eqnarray}
This completes the proof.
\end{proof}

\bigskip

\noindent {\bf Proof of Theorem \ref{thm1}}: By Lemmas \ref{Lem1} and \ref{Lem2}, it follows that $R_{,i,j,k}I^i I^j I^k$ is a factor of $||I||^2$. Thus
\[
R_{,i,j,k}I^i I^j I^k=A_{ij}I_k+B_ig_{jk}.
\]
It is remarkable that, since  $R_{,i,j,k}$ is symmetric with respect to indexes i, j and k, then the order of indexes in this relation doesn't matter. Now, multiplying $R_{,i,j,k}$  with $y^k$ or $y^j$ implies that $R_{,i}=0$. It means that $R=R(x)$ and then $F_t$ is an Einstein metric.

\section{Normal Ricci Flow Equation on $(\alpha, \beta)$-Metrics}
If M is a compact manifold, then $S(M)$ is compact  and we can
normalize the Ricci flow equation by requiring that the flow keeps the volume of
$SM$ constant. Recalling the Hilbert form $\omega:= F_{y^i}dx^i$, that volume is
\[
Vol_{SM}:=\int_{SM}\frac{(-1)}{(n - 1)!}^{\frac{n(n-1)}{2}}\omega \wedge (d \omega)^{n-1}:=\int_{SM} dV_{SM}.
\]
During the evolution, $F$, $\omega$ and consequently the volume form $dV_{SM}$ and
the volume $Vol_{SM}$, all depend on $t$. On the other hand, the domain of
integration $SM$, being the quotient space of $TM_0$ under the equivalence
relation $z\sim y$ , $z= \lambda y$ for some $\lambda > 0$, is totally independent of any Finsler
metric, and hence does not depend on t. We have
\[
\frac{d}{dt}(dV_{SM})=\big[g_{ij} \frac{d}{dt}g_{ij} -n\frac{d}{dt}\log F\big] dV_{SM}
\]
A normalized Ricci flow for Finsler metrics is proposed by Bao as follows
\begin{equation}\label{logf}
\frac{d}{dt}\log F= -R +\frac{1}{Vol(SM)}\int_{SM}R\ dV,\ \ \  F(t=0)=F_0,
\end{equation}
where the underlying manifold $M$ is compact. Now, we let $Vol(SM)=1$. Then all of  Ricci-constant metrics are exactly the fixed points of the above flow.
Let
\[
Ric_{ij}=\frac{1}{2}(F^2R)_{.y^i.y^j}
\]
and differentiating (\ref{logf}) with respect to $y^i$ and $y^j$ the following normal Ricci flow tensor evaluation equation is concluded
\begin{equation}
\frac{d}{dt}g_{ij}=-2Ric_{ij}+\frac{2}{Vol(SM)}\int_{SM}R\ dVg_{ij},\ \ \  g(t=0)=g_0,
\end{equation}
Starting with any familiar metric on M as the initial data $F_0$, we may deform
it using the proposed normalized Ricci flow, in the hope of arriving at a Ricci constant
metric.

\begin{thm}
Let $(M, F)$ be a Finsler manifold of dimension $n\geq 3$. Suppose that $F=\Phi(\frac{\beta}{\alpha})\alpha$ be an $(\alpha, \beta)$-metric on $M$.  Then every deformation $F_t$ of the metric $F$  satisfying normal Ricci flow equation is an  Einstein metric.
\end{thm}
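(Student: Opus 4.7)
The strategy is to run the proof of Theorem~\ref{thm1} verbatim while tracking the extra terms produced by the additional $2\lambda g_{ij}$ piece of the normal flow. Write $\lambda(t):=\frac{1}{Vol(SM)}\int_{SM}R\,dV$; in tensor form the evolution is
\[
g'_{ij}=-2Ric_{ij}+2\lambda g_{ij},\qquad \frac{F'}{F}=-R+\lambda.
\]
The crucial structural observation is that $\lambda$ depends only on $t$, so $\partial\lambda/\partial y^i=0$; consequently every $\lambda$-correction that appears in the chain of variations reduces to a scalar multiplier of tensors already under control.

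First I would reprove Lemma~\ref{Lem1} under the normal flow. The algebraic identity $Ric_{ij}=\tfrac12(F^2R)_{y^iy^j}$ is intrinsic to the Finsler structure and survives unchanged, so equations (\ref{R})--(\ref{C}) carry over unchanged. The only modification is in the variation formula for the Cartan tensor: differentiating $2\lambda g_{ij}$ in $y^k$ yields $2\lambda\cdot 2C_{ijk}\cdot\tfrac12=2\lambda C_{ijk}$, hence $C'_{ijk}=-Ric_{ij,k}+2\lambda C_{ijk}$. Contracting with $I^iI^jI^k$ and using (\ref{C}), identity (\ref{Ric3}) picks up the single extra term $\frac{2\lambda(1+nq)}{n+1}\|I\|^4$, which is manifestly a factor of $\|I\|^2$.

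Next I would reprove Lemma~\ref{Lem2}. The raised-index variation becomes $g'^{ij}=2Ric^{ij}-2\lambda g^{ij}$, and when this is fed into $I'_i=(g^{jk})'C_{ijk}+g^{jk}C'_{ijk}$ the two $\lambda$-contributions cancel exactly, leaving $I'_i=-\rho_i$ as in the un-normal case. The other variations pick up only clean, controlled $\lambda$-terms: $I'^i$ gains $-2\lambda I^i$; $y'_i=-2Ric_{im}y^m+2\lambda y_i$; and the angular-metric variation $h'_{ij}$ collapses to exactly $+2\lambda h_{ij}$ on top of the expression in Lemma~\ref{Lem2}. Propagating these into (\ref{mainCar}) and contracting with $I^iI^jI^k$, the new contributions to $C'_{ijk}I^iI^jI^k$ come in two pieces: from the $I'^mI_m+I^mI'_m$ factor inside $[\tfrac{1}{\|I\|^2}I_iI_jI_k]'$ one obtains a term proportional to $\lambda q(nq+1)\|I\|^2/(n+1)$, while the $2\lambda h_{ij}$ correction in $[h_{ij}I_k+h_{jk}I_i+h_{ki}I_j]'$ contributes a term proportional to $\lambda p\,\|I\|^4/(n+1)$. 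Both are factors of $\|I\|^2$, so the conclusion of Lemma~\ref{Lem2} survives.

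With both lemmas upgraded, the final step repeats the proof of Theorem~\ref{thm1} unchanged. Comparing the two expressions for $C'_{ijk}I^iI^jI^k$, every term except $-\tfrac12 F^2R_{,i,j,k}I^iI^jI^k$ is a multiple of $\|I\|^2$, so $R_{,i,j,k}I^iI^jI^k$ must factor through $\|I\|^2$; writing $R_{,i,j,k}I^iI^jI^k=A_{ij}I_k+B_ig_{jk}$ and contracting with $y^k$ (or $y^j$) forces $R_{,i}=0$, so $R=R(x)$ and $F_t$ is Einstein. I expect the only real difficulty to be bookkeeping in $h'_{ij}$, where several $\lambda$-pieces arising from $y'_i$, $(F^{-2})'$ and $g'_{ij}$ must be seen to collapse to the single term $2\lambda h_{ij}$; once that collapse is checked, the remaining $\lambda$-contributions propagate trivially because $\lambda$ is constant in $(x,y)$.
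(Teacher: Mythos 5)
Your proposal is correct and follows essentially the same route as the paper: track the extra $\lambda$-term of the normal flow through the variation formulas, observe that $I'_i=-\rho_i$ survives and that all new contributions to $C'_{ijk}I^iI^jI^k$ are multiples of $\|I\|^2$, and then repeat the un-normal argument to force $R_{,i}=0$. If anything, you are more careful than the paper, which invokes Lemma \ref{Lem1} verbatim without noting the extra $2\lambda C_{ijk}$ term in $C'_{ijk}$ that you correctly identify and absorb as a factor of $\|I\|^2$.
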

\begin{proof}
Now, we consider Finsler surfaces that satisfies the normal Ricci flow equation. Then
\begin{equation}
\frac{dg_{ij}}{dt}=-2Ric_{ij}+2\int_{SM} R\ dV g_{ij},  \ \ d \log F:=\frac{F'}{F}=-R+\int_{SM} R\ dV.
\end{equation}
By the same argument used in the un-normal Ricci flow case, we can calculate the variation of mean Cartan tensor as follows
\begin{eqnarray}
\nonumber I'_i=(g^{jk}C_{ijk})' \!\!\!\!&=&\!\!\!\!\ (g^{jk})'C_{ijk}+g^{jk}C'_{ijk}\\
\nonumber \!\!\!\!&=&\!\!\!\!\ 2[Ric^{jk}-\int_{SM} R\ dV g^{jk}]C_{ijk}+g^{jk}[Ric_{jk,i}+2\int_{SM} R\ dV C_{ijk}]\\
\nonumber \!\!\!\!&=&\!\!\!\!\ 2Ric^{jk}g_{jk,i}-(g^{jk}Ric_{jk})_{,i}+g^{jk}_{\ \ ,i}Ric_{jk}\\
\nonumber\!\!\!\!&=&\!\!\!\!\ -(g^{jk}Ric_{jk})_{,i}\\
\!\!\!\!&=&\!\!\!\!\ -\rho_{i}
\end{eqnarray}
Then we have
\begin{eqnarray}
\nonumber I'^i=(g^{ij}I_j)' \!\!\!\!&=&\!\!\!\!\ (g^{ij})'I_j+g^{ij}I'_j\\
\!\!\!\!&=&\!\!\!\!\ 2[Ric^{ij}-\int_{SM} R\ dV g^{ij}]I_j-g^{ij}\rho_{j}
\end{eqnarray}
By the same way that we used in un-normal Ricci flow, it follows that
\begin{eqnarray}
C'_{ijk}=\frac{-(I'^mI_m+I^mI'_m)}{||I||^2}C_{ijk} -\frac{1}{||I||^2}(\rho_{i}I_jI_k+\rho_{j}I_iI_k+\rho_{k}I_iI_j)
\end{eqnarray}
Contracting it with $I^iI^jI^k$ yields
\begin{equation}
C'_{ijk}I^i I^j I^k =( \Omega ||I||^2 -3\rho_{m}I^m)||I||^2,
\end{equation}
where
\[
\Omega:=-\frac{I'^mI_m+I^mI'_m}{||I||^2}=\frac{2\rho^mI_m-2Ric^{ml}I_l}{||I||^2}+2\int_{SM} R\ dV .
\]
By Lemma \ref{Lem1}, we deduce that $R_{,i,j,k}I^i I^j I^k$ is a factor of $||I||^2$. By the same argument, it result that every deformation $F_t$ of the metric $F$  satisfying normal Ricci flow equation is an  Einstein metric.
\end{proof}


Akbar Tayebi\\
Faculty  of Science, Department of Mathematics\\
Qom University\\
Qom. Iran\\
Email:\ akbar.tayebi@gmail

\bigskip

\noindent
Esmail Peyghan\\
Faculty  of Science, Department of Mathematics\\
Arak University\\
Arak. Iran\\
Email: epeyghan@gmail.com
\bigskip

\noindent
Behzad Najafi\\
Faculty  of Science, Department of Mathematics\\
Shahed University\\
Tehran. Iran\\
Email:\ najafi@shahed.ac.ir


\begin{thebibliography}{MaHo}


\bibitem{A} H. Akbar-Zadeh, {\it Generalized Einstein manifolds}, J.  Geom. Phys. {\bf 17}(1995), 342-380.

\bibitem{Ba} D. Bao, {\it On two curvature-driven problems in Riemann-Finsler geometry}, Advanced
Studies in Pure Mathematics XX, 2007.

\bibitem{BR} D. Bao and C. Robles, {\it Ricci and flag curvatures in Finsler
geometry}. In A Sampler of Riemann–Finsler Geometry, Mathematical
Sciences Research Institute Publications, vol. 50, Cambridge University
Press, 2004, 197-259.

\bibitem{B} L. Berwald, {\it On Cartan and Finsler Geometries, III, Two Dimensional Finsler Spaces with
Rectilinear Extremal}, Ann. Math. {\bf 42} No. 2 (1941) 84–122.

\bibitem{caozhu} H. -D. Cao and X. -P. Zhu,  {\it Hamilton--Perelman's proof of the Poincar'{e} conjecture and the geometrization conjecture}, Asian J. Math. \textbf{10} (2006) 165--495; arXiv:\ math. DG/ 0612069.

\bibitem{H} R. S. Hamilton, {\it Four Manifolds with Positive Curvature Operator}, J. Diff. Geom. {\bf 24},
153-179, 1986.

\bibitem{ham1} R. S. Hamilton,  {\it Three--manifolds with positive Ricci curvature}, J. Diff. Geom. \textbf{17} (1982) 255-306.

\bibitem{kleiner} B. Kleiner and J. Lott,  {\it Notes on Perelman's papers}, arXiv:
math.DG/ 0605667.


 \bibitem{Mat5} M. Matsumoto,  {\it Theory of Finsler spaces with $(\alpha, \beta)$-metric}, Rep. Math. Phys. {\bf 31}(1992), 43-84.
 \bibitem{MH} M. Matsumoto and S. H\={o}j\={o}, {\it A conclusive theorem for C-reducible Finsler spaces}, Tensor. N. S. {\bf 32}(1978), 225-230.
\bibitem{Mat4} M. Matsumoto and  C. Shibata,  {\it On semi-C-reducibility, T-tensor and S4-1ikeness of Finsler spaces}, J. Math. Kyoto Univ. {\bf 19}(1979), 301-314.

\bibitem{rbook} J. W. Morgan and G. Tian,  {\it Ricci flow and the Poincar\'{e} conjecture}, arXiv:\ math.DG/ 0607607.

\bibitem{NT} B. Najafi and A. Tayebi, {\it On a family of Einstein Randers metric}, International Journal of Geometric   Methods in Modern Physics, (2011), DOI No: 10.1142/S021988781100552X.


\bibitem{gper1} G. Perelman,  {\it The entropy formula for the Ricci flow and its
geometric applications}, arXiv:\ math.DG/ 0211159.

\bibitem{gper2} G. Perelman,  {\it Ricci flow with surgery on three--manifolds}, arXiv:\ math. DG/ 03109.

\bibitem{gper3} G. Perelman, Finite extinction time for the solutions to the
Ricci flow on certain three--manifolds, arXiv:\ math.DG/ 0307245.

\bibitem{SR} N. SadeghZadeh and A. Razavi, {\it  Ricci Flow equation on C-reducible metrics}, International Journal of Geometric   Methods in Modern Physics, (2011), DOI No: 10.1142/S0219887811005385

\bibitem{TP} A. Tayebi and E. Peyghan, {\it  On Ricci tensors of Randers metrics}, J. Geom. Phys. {\bf 60}(2010), 1665-1670.

\bibitem{TP2} A. Tayebi and E. Peyghan, {\it  On a class of Riemannian metrics arising from Finsler structures},  C. R. Acad. Sci. Paris, Ser. I   {\bf 349} (2011), 319-322.

\bibitem{vricci1} S. Vacaru,  {\it Nonholonomic ricci flows: II. evolution
equations and dynamics}, J. Math. Phys. \textbf{\ 49} (2008), 043504.

\bibitem{vricci2} S. Vacaru,  {\it Nonholonomic Ricci flows, exact solutions in
gravity, and Symmetric and Nonsymmetric Metrics}, Int. J. Theor. Phys.
\textbf{\ 48 } (2009) 579-606

\bibitem{vricci3} S. Vacaru,  {\it The Entropy of Lagrange-Finsler spaces and
Ricci flows}, Rep. Math. Phys. \textbf{\ 63 } (2009) 95-110

\bibitem{vricci4} S. Vacaru, {\it Spectral functionals, nonholonomic Dirac
operators, and noncommutative Ricci flows}, J. Math. Phys. \textbf{\ 50 }
(2009) 073503

\bibitem{vfract1} S. Vacaru, {\it Fractional Nonholonomic Ricci Flows}, arXiv:1004.0625.

\bibitem{vrfsol2} S. Vacaru, {\it Nonholonomic Ricci flows and parametric
deformations of the solitonic pp--waves and Schwarzschild solutions}, Electronic Journal of Theoretical Physics \textbf{6}, N21 (2009), 63-93.

\bibitem{vrfsol3} S. Vacaru, {\it Nonholonomic Ricci flows: Exact solutions and
gravity}, Electronic Journal of Theoretical Physics \textbf{6}, N20 (2009)
27-58.






\end{thebibliography}
\end{document}